\begin{document}
\newtheorem{thm}{Theorem}
\newtheorem{cor}[thm]{Corollary}
\newtheorem{conj}[thm]{Conjecture}
\newtheorem{lemma}{Lemma}
\newtheorem{prop}[thm]{Proposition}
\newtheorem{problem}{Problem}
\newtheorem{proof}[thm]{Proof} 
\newtheorem{remark}[thm]{Remark}
\newtheorem{defn}[thm]{Definition}
\newtheorem{ex}{Example}

\IEEEoverridecommandlockouts
\newcommand{\cE}{{\mathcal E}}
\newcommand{\cF}{{\mathcal F}}
\newcommand{\cX}{{\Phi_1}}
\newcommand{\cY}{{\Phi_0}}
\newcommand{\bI}{{\mathbb I}}
\newcommand{\cG}{{\mathcal G}}
\newcommand{\mC}{{\mathbb C}}
\newcommand{\mE}{{\mathbb E}}
\newcommand{\cD}{{\mathcal D}}
\newcommand{\cL}{{\mathcal L}}
\newcommand{\cP}{{\mathcal P}}
\newcommand{\cS}{{\mathcal S}}
\newcommand{\cSR}{{\mathcal SR}}
\newcommand{\cK}{{\mathcal K}}
\newcommand{\cM}{{\mathcal M}}
\newcommand{\cC}{{\mathcal C}}

\newcommand{\R}{{\mathbb R}}
\newcommand{\mP}{{\mathbb P}}
\newcommand{\C}{{\mathbb C}}
\newcommand{\D}{{\mathbb D}}
\newcommand{\E}{{\mathbb E}}
\newcommand{\mcN}{{\mathcal N}}
\newcommand{\mcR}{{\mathcal R}}
\newcommand{\fD}{{\mathfrak D}}
\newcommand{\fH}{{\mathfrak H}}
\newcommand{\fM}{{\mathfrak M}}
\newcommand{\fP}{{{\mathfrak P}}}
\newcommand{\tr}{\operatorname{trace}}
\newcommand{\diag}{\operatorname{diag}}
\newcommand{\support}{{\it Supp}}
\newcommand{\ignore}[1]{}

\newcommand{\bike}{\color{blue}}
\newcommand{\mike}{\color{magenta}}
\newcommand{\rike}{\color{red}}

\title{\LARGE \bf Relaxed Schr\"odinger bridges and robust network routing 
*}

\author{Yongxin Chen$^{1}$, Tryphon Georgiou$^{2}$, Michele Pavon$^{3}$ and Allen Tannenbaum$^{4}$%
\thanks{*This project was supported by AFOSR grants (FA9550-15-1-0045 and FA9550-17-1-0435), ARO grant (W911NF-17-1-049), grants from the National Center for Research Resources (P41-RR-013218) and the National Institute of Biomedical Imaging and Bioengineering (P41-EB-015902), NCI grant (1U24CA18092401A1), NIA grant (R01 AG053991), a grant from the Breast Cancer Research Foundation and by the University of Padova Research Project CPDA 140897.}%
\thanks{$^{1}$Yongxin Chen is with the Department of Electrical and Computer Engineering, Iowa State University, IA 50011,
Ames, Iowa, USA
{\tt\small yongchen@iastate.edu}}%
\thanks{$^{2}$ Tryphon Georgiou is with the Department of Mechanical and Aerospace Engineering,
University of California, Irvine, CA 92697, USA
{\tt\small tryphon@uci.edu}}%
\thanks{$^{3}$ Michele Pavon is with the Dipartimento di Matematica ``Tullio Levi-Civita",
Universit\`a di Padova, 35121 Padova, Italy
{\tt\small pavon@math.unipd.it}}%
\thanks{$^{4}$ Allen Tannenbaum is with the Departments of Computer Science and Applied Mathematics \& Statistics, Stony Brook University, NY 11794, USA {\tt\small allen.tannenbaum@stonybrook.edu}}
}

\maketitle
\thispagestyle{empty}
\pagestyle{empty}

\begin{abstract} We consider network routing under random link failures with a desired final distribution. We provide a mathematical formulation of a relaxed transport problem where the final distribution only needs to be close to the desired one. The problem is a maximum entropy problem for path distributions with an extra terminal cost. We show that the unique solution may be obtained solving a  {\em generalized Schr\"{o}dinger system}. An iterative algorithm to compute the solution is provided. It contracts the Hilbert metric with contraction ratio less than $1/2$ leading to extremely fast convergence.
\end{abstract}

\maketitle
\thispagestyle{empty}
\pagestyle{empty}
\section{Introduction}
Containing the 2017-18 Southern California wild fires has been a major challenge for CAL FIRE involving dispatching hundreds of fire engines and thousands of fire fighters including some provided by ten other states. Efficiently dispatching the fire engines over a long period of time (the Thomas fire, for instance, burned for more than one month) is a difficult task. The problem can be roughly described as follows: At the initial time $t=0$ we have a certain distribution of fire engines in certain locations (nodes). Within at most $N$ time units, so as to provide the crew shift, the engines must reach through the available road network the various fire locations (other nodes). The distribution must guarantee the minimum force necessary to fight each specific fire. Considering the difficulties and hazards involved in reaching their destination, it seems  reasonable to require that the final distribution of the fire engines be {\em close} (rather than equal) to a desired one. Another spec of the routing plan is robustness  with respect to link failures. This could be accomplished by dispatching engines on different routes even when they have the same target.

In this paper, building on our previous work \cite{CGPT1,CGPT2}, which deal with the case of a fixed terminal distribution, we provide a precise mathematical formulation of the above relaxed problem. It is a maximum entropy problem for probability distributions on the feasible paths with a terminal cost. We study a relaxed version of the usual Schr\"odinger bridge problem without a hard constraint on the terminal marginal but with an extra terminal cost. The solution is obtained by solving iteratively a {\em generalized Schr\"odinger system}. Convergence of the algorithm in the natural projective metric is established. In \cite{HW}, which is a sort of relaxation of \cite{CGP1},  the problem of optimally steering a linear stochastic system with a Wasserstein distance terminal cost was studied. In \cite{CPSV} (see also \cite{JohRin17}), a  regularized transport problem with very general boundary costs is considered and solved through iterative {\em Schr\"odinger-Fortet-Demin-Stephan-Sinkhorn-like} algorithms \cite{S1,S2,For,DS1940,Sin64}. Although our dynamic problem can be reduced to a static one of the form considered in \cite{CPSV} (see Section~\ref{relaxedbridges}), employing a general prior measure {\em on the trajectories} has some advantages.  Indeed, the static formulation solution does not yield immediate by-product information on the new transition probabilities and on what paths the optimal mass flow occurs and is therefore less suited for many network routing applications. Moreover, we want to allow for general prior measures not necessarily of the Boltzmann's type considered in the previous work. Finally, we prove convergence of the iterative algorithm in the Hilbert rather than Thompson metric as it usually provides the best contraction ratio \cite[Theorem 3.4]{bushell1973hilbert}, \cite{KP}.

We model the network through a directed graph and seek to design the routing policy so that the distribution of the commodity at some prescribed time horizon is close to a desired one. The optimal feedback control suitably modifies a prior transition mechanism. We also attempt to implicitly obtain other desirable properties of the optimal policy by suitably choosing a prior measure in a maximum entropy problem for distributions on paths.    Robustness with respect to network failures, namely spreading of the mass as much as the topology of the graph and the final distribution allow, is accomplished by employing as prior transition the {\em adjacency matrix} of the graph. Our intuitive notion of robustness of the routing policy should not be confused with other notions of robustness concerning networks which have been put forward and studied, see e.g. \cite{Albertetal2000,bara2014robustness,olver2010robust,arnoldetal1994,demetrius2005,savla2014robust}.
In particular, in \cite {arnoldetal1994,demetrius2005}, robustness has been defined through a fluctuation-dissipation relation involving the entropy rate.
This latter notion captures relaxation of a process back to equilibrium after a perturbation and has been used to study both financial and biological networks \cite{Sandhu, Sandhu1}. This paper is addressed to  transportation and data networks problems and does not concern equilibrium or near equilibrium cases.

The outline of the paper is as follows. In the next section, we define the relaxed transport problem. In Section~\ref{main}, we state and prove the main result reducing the problem to solving a generalized Schr\"{o}dinger system. In Section~\ref{Hilbert}, we review some fundamental concepts and results concerning Hilbert's projective metric. In Section~\ref{genSch}, we establish existence and uniqueness for the generalized Schr\"{o}dinger system through a contraction mapping principle. Finally, in Section~\ref{algo}, we outline an iterative algorithm to compute the solution and some extensions of the results.

\section{Relaxed Schr\"odinger bridges}\label{relaxedbridges}


Consider a directed, strongly connected aperiodic graph ${\bf G}=(\mathcal X,\mathcal E)$ with vertex set $\mathcal X=\{1,2,\ldots,n\}$ and edge set $\mathcal E\subseteq \mathcal X\times\mathcal X$.  We let  time vary in ${\mathcal T}=\{0,1,\ldots,N\}$, and let ${\mathcal {FP}}_0^N\subseteq\mathcal X^{N+1}$ denote the family of length $N$, feasible paths $x=(x_0,\ldots,x_N)$, namely paths such that $(x_i,x_{i+1})\in\mathcal E$ for $i=0,1,\ldots,N-1$.

We seek a probability distribution $\fP$ on ${\mathcal {FP}}_0^N$ with prescribed initial probability distribution $\nu_0(\cdot)$ and terminal distribution close to $\nu_N(\cdot)$,  such that the resulting random evolution
is closest to a ``prior'' measure $\fM$ on ${\mathcal {FP}}_0^N$ in a suitable sense.
The prior law $\fM$ is induced by the Markovian evolution
 \begin{equation}\label{FP}
\mu_{t+1}(x_{t+1})=\sum_{x_t\in\mathcal X} \mu_t(x_t) m_{x_{t}x_{t+1}}(t)
\end{equation}
with nonnegative distributions $\mu_t(\cdot)$ over $\mathcal X$, $t\in{\mathcal T}$, and weights $m_{ij}(t)\geq 0$ for all indices $i,j\in{\mathcal X}$ and all times. Moreover, to respect the topology of the graph, $m_{ij}(t)=0$ for all $t$ whenever $(i,j)\not\in\mathcal E$.  Often, but not always, the matrix
\begin{equation}\label{eq:matrixM}
M(t)=\left[ m_{ij}(t)\right]_{i,j=1}^n
\end{equation}
does not depend on $t$.
The rows of the transition matrix $M(t)$ do not necessarily sum up to one, so that the ``total transported mass'' is not necessarily preserved.  This occurs, for instance, when $M(t)$ simply encodes the topological structure of the network with $m_{ij}(t)$ being zero or one, depending on whether a certain link exists at each time $t$. It is also possible to take into account the length of the paths leading to solutions which compromise between speading the mass and transporting on shorter paths, see \cite{CGPT1,CGPT2}.
The evolution \eqref{FP} together with the measure $\mu_0(\cdot)$, which we assume positive on $\mathcal X$, i.e.,
\begin{equation}\label{eq:mupositive}
\mu_0(x)>0\mbox{ for all }x\in\mathcal X,
\end{equation}
 induces
a measure $\fM$ on ${\mathcal {FP}}_0^N$ as follows. It assigns to a path  $x=(x_0,x_1,\ldots,x_N)\in{\mathcal {FP}}_0^N$ the value
\begin{equation}\label{prior}\fM(x_0,x_1,\ldots,x_N)=\mu_0(x_0)m_{x_0x_1}(0)\cdots m_{x_{N-1}x_N}(N-1),
\end{equation}
and gives rise to a flow
of {\em one-time marginals}
\[\mu_t(x_t) = \sum_{x_{\ell\neq t}}\fM(x_0,x_1,\ldots,x_N), \quad t\in\mathcal T.\]

We seek a distribution  which is closest to the prior $\fM$ in {\em relative entropy} where, for $P$ and $Q$ measures on $\mathcal X^{N+1}$,  the relative entropy (divergence, Kullback-Leibler index) $\D(P\|Q)$ is
\begin{equation*}
\D(P\|Q)\hspace*{-2pt}:=\hspace*{-2pt}\left\{\begin{array}{ll} \hspace*{-5pt}\sum_{x\in\mathcal X^{N+1}}P(x)\log\frac{P(x)}{Q(x)}, & \hspace*{-5pt}\support (P)\subseteq \support (Q),\\
\hspace*{-5pt}+\infty , & \hspace*{-5pt}\support (P)\not\subseteq \support (Q),\end{array}\right.
\end{equation*}
Here, by definition,  $0\cdot\log 0=0$.
Naturally, while the value of $\D(P\|Q)$ may turn out negative due to miss-match of scaling (in case $Q=\fM$ is not a probability measure), the relative entropy is always jointly convex. Moreover,
\[\D(P\|Q)- \sum_{x\in\mathcal X^{N+1}}P(x)+ \sum_{x\in\mathcal X^{N+1}}Q(x)\ge 0.
\]
Since for probability distributions we have
\[\sum_{x\in\mathcal X^{N+1}}P(x)=1,
\]
minimizing the nonnegative quantity $\D(P\|Q)- \sum_{x}P(x)+ \sum_{x}Q(x)$ over a family of probability distributions $P$, even when the prior $Q$ has a different total mass, is equivalent to minimizing over the same set $\D(P\|Q)$. We are now ready to formalize the problem. Let $\nu_0$ and $\nu_N$ be two probability distributions on $\mathcal X$ and let $\mathcal P(\nu_0)$ be the family of all Markovian probability distributions on $\mathcal X^{N+1}$ of the form (\ref{prior}) with initial marginal $\nu_0$. Rather than imposing the desired final marginal $\nu_N$ as in the standard Schr\"{o}dinger bridge problem, we consider the following ``relaxed problem":
\begin{problem}\label{relbridge}
\begin{subequations}\label{eq:general}
 \begin{eqnarray}
{\rm minimize}\;J(P):=\D(P\|\fM)+\D(p_N\|\nu_N)\\ {\rm over} \; \{P\in {\mathcal P}(\nu_0)\}.
\end{eqnarray}
\end{subequations}
\end{problem}Clearly, we can restrict the  minimization to distributions in ${\mathcal P}_S(\nu_0)$, namely distributions in
${\mathcal P}(\nu_0)$ such that
\begin{equation}\label{support}\support (p_N)\subseteq \support (\nu_N) .
\end{equation}
The connection between the dynamic Problem \ref{relbridge} and a static problem such as those considered in \cite{CPSV}, can be obtained as follows. Let $P$ and $Q$ be two probability distributions on $\mathcal X^{N+1}$. For $x=(x_0,x_1,\ldots,x_N)\in\mathcal X^{N+1}$, consider the multiplicative decomposition
$$P(x)=P_{x_0,x_N}(x)p_{0N}(x_0,x_N),
$$
where
\[P_{\bar{x}_0,\bar{x}_N}(x)=P(x|x_0=\bar{x}_0,x_n=\bar{x}_N)\]
and we have assumed that $p_{0N}$ is everywhere positive on $\mathcal X\times \mathcal X$, and a similar one for $Q$.
We get
\begin{eqnarray}\nonumber
\D(P\|Q)=\sum_{x_0x_N}p_{0N}(x_0,x_N)\log \frac{p_{0N}(x_0,x_N)}{q_{0N}(x_0,x_N)}\\+\sum_{x\in{\cal X}^{N+1}}P_{x_0,x_N}(x)\log \frac{P_{x_0,x_N}(x)}{Q_{x_0,x_N}(x)} p_{0N}(x_0,x_N).\nonumber
\end{eqnarray}
This is the sum of two nonnegative quantities. The second becomes zero if and only if $P_{x_0,x_N}(x)=Q_{x_0,x_N}(x)$ for all $x\in{\cal X}^{N+1}$.  Thus, $P^*_{x_0,x_N}(x)=Q_{x_0,x_N}(x)$. 
Thus, Problem  \ref{relbridge} can be reduced to
 \begin{eqnarray}
{\rm minimize}\;J(P):=\D(p_{0N}\|m_{0N})+\D(p_N\|\nu_N)\\ {\rm over} \; \{p_{0N}:  \sum_{x_N}p_{0N}(\cdot,x_N)=\nu_0(\cdot)\}.
\end{eqnarray}
This argument  extends to the situation where the prior measure mass is not one. We prefer to discuss the original formulation (\ref{eq:general}) for the reasons described in the Introduction.

\section{Main result}\label{main}
We have the following characterization of the solution.
\begin{thm}\label{mainthm}Assume that the matrix
\begin{equation}\label{transition0N}G:=M(N-1) M(N-2)\cdots M(1) M(0)=\left(g_{ij}\right) \end{equation}
has all positive elements  $g_{ij}$. Suppose there exist two functions $\varphi$ and $\hat{\varphi}$ mapping $\{0,1,\ldots,N\}\times\mathcal X$ into the nonnegative reals and satisfying the {\em generalized Schr\"{o}dinger system}
\begin{subequations}\label{eq:generalizedbridge}
\begin{align}\label{eq:generalizedbridgeA}
&\varphi(t,i)=\sum_{j}m_{ij}(t)\varphi(t+1,j), \; 0\le t\le N-1,\\
&\hat\varphi(t+1,j)=\sum_{i}m_{ij}(t)\hat\varphi(t,i),\; 0\le t\le N-1,\label{eq:generalizedbridgeB}\\
\label{eq:generalizedbridgeC}
&\varphi(0,i)\hat\varphi(0,i)=\nu_0(i),\\\label{eq:generalizedbridgeD}
&\varphi(N,j)^2\hat\varphi(N,j)=\nu_N(j).
\end{align}
\end{subequations}
For $0\le t\le N-1$ and $(i,j)\in\mathcal X\times\mathcal X$, we define
\begin{equation}\label{Opttransition1}\pi^*_{ij}(t):=m_{ij}(t)\frac{\varphi(t+1,j)}{\varphi(t,i)}.
\end{equation}
which constitute a family of {\em bona fide} transition probabilities. Then, the solution $\fP^*$ to Problem \ref{relbridge} is unique and given by the Markovian distribution
\begin{equation}\label{optmeasure}
\fP^*(x_0,\ldots,x_N)=\nu_0(x_0)\pi^*_{x_0x_{1}}(0)\cdots \pi^*_{x_{N-1}x_{N}}(N-1).
\end{equation}
\end{thm}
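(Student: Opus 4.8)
\emph{Plan.} The plan is to avoid Lagrange duality and argue by \emph{direct verification}, resting on the single identity
\[
J(P)\;=\;c\;+\;\D(P\|\fP^*)\;+\;\D(p_N\|p^*_N),
\]
valid for every admissible $P$, where $c$ is a constant fixed by the data and $p^*_N$ is the terminal marginal of the candidate $\fP^*$ in \eqref{optmeasure}. Since the two divergences are nonnegative and vanish simultaneously only at $P=\fP^*$, this identity delivers existence, optimality \emph{and} uniqueness at once, and it sidesteps the fact that $\mathcal P(\nu_0)$ need not be convex. Before anything else I need $\varphi(t,i)>0$ so that \eqref{Opttransition1} and \eqref{optmeasure} make sense: since $\nu_N$ is a probability vector it is not identically zero, so \eqref{eq:generalizedbridgeD} forces $\varphi(N,j)>0$ for some $j$; back-propagating \eqref{eq:generalizedbridgeA} and using $g_{ij}>0$ gives $\varphi(0,i)>0$ for all $i$, and strong connectivity of ${\bf G}$ (every node has an out-edge at every time) then propagates strict positivity to all $(t,i)$ when $\nu_N$ is everywhere positive; otherwise one simply discards the states $(t,i)$ with $\varphi(t,i)=0$, which lead only to terminal nodes outside $\support(\nu_N)$ and so carry no mass under any finite-cost competitor. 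This is essentially the only place the hypothesis on $G$ is used in the present argument.

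\emph{Admissibility and a closed form for $\fP^*$.} From \eqref{eq:generalizedbridgeA}, $\sum_j\pi^*_{ij}(t)=\varphi(t,i)^{-1}\sum_j m_{ij}(t)\varphi(t+1,j)=1$, while $\pi^*_{ij}(t)\ge0$ and $\pi^*_{ij}(t)=0$ off $\mathcal E$; hence each $\pi^*(t)$ is a genuine transition matrix and \eqref{optmeasure} defines a Markovian probability law of the form \eqref{prior} with initial marginal $\nu_0$, so $\fP^*\in\mathcal P(\nu_0)$. Telescoping the ratios $\varphi(t+1,x_{t+1})/\varphi(t,x_t)$ in \eqref{Opttransition1} and using \eqref{eq:generalizedbridgeC} as $\nu_0(x_0)/\varphi(0,x_0)=\hat\varphi(0,x_0)$ yields
\[
\fP^*(x_0,\dots,x_N)=\hat\varphi(0,x_0)\,\varphi(N,x_N)\,\prod_{t=0}^{N-1}m_{x_tx_{t+1}}(t).
\]
Summing over $x_0,\dots,x_{N-1}$ and iterating the \emph{forward} recursion \eqref{eq:generalizedbridgeB} for $\hat\varphi$ then gives $p^*_N(j)=\varphi(N,j)\hat\varphi(N,j)$ (in general \emph{not} $\nu_N$ — precisely the effect of the relaxation), and in particular both $\fP^*$ and $p^*_N$ are bona fide probability distributions.

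\emph{The identity and conclusion.} Take $P\in\mathcal P(\nu_0)$; I may assume $J(P)<\infty$, which forces $\support(p_N)\subseteq\support(\nu_N)$ and — because $\mu_0>0$ everywhere while a single atom $x_N$ with $\nu_N(x_N)=0$ already makes $J(P)=\infty$ — also $\support(P)\subseteq\support(\fP^*)$. On $\support(P)$ all of $P,\fP^*,\fM$ are positive, so, writing $\log\frac{P(x)}{\fM(x)}=\log\frac{P(x)}{\fP^*(x)}+\log\frac{\fP^*(x)}{\fM(x)}$, using $\fP^*(x)/\fM(x)=\hat\varphi(0,x_0)\varphi(N,x_N)/\mu_0(x_0)$ and $p_0=\nu_0$,
\[
\D(P\|\fM)=\D(P\|\fP^*)+c+\sum_{x_N}p_N(x_N)\log\varphi(N,x_N),\qquad c:=\sum_{x_0}\nu_0(x_0)\log\frac{\hat\varphi(0,x_0)}{\mu_0(x_0)}.
\]
Adding $\D(p_N\|\nu_N)$ and substituting $\nu_N(x_N)=\varphi(N,x_N)^2\hat\varphi(N,x_N)$ from \eqref{eq:generalizedbridgeD} merges the last sum with $\D(p_N\|\nu_N)$ into $\D(p_N\|p^*_N)$, giving the claimed identity. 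Hence $J(P)\ge c=J(\fP^*)$, with equality only if $\D(P\|\fP^*)=0$, i.e.\ $P=\fP^*$; thus $\fP^*$ is the unique solution of Problem~\ref{relbridge}.

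\emph{Main obstacle and an alternative.} The algebra above is routine; the two delicate points are (i) the strict positivity of $\varphi$ on the relevant part of the state space — the genuine use of $g_{ij}>0$ together with strong connectivity — and (ii) the support bookkeeping, so that expanding $\D(P\|\fM)$ through $\fP^*$ is legitimate and no hidden $+\infty$ is silently dropped. A less self-contained route would first reduce, as in Section~\ref{relaxedbridges}, to the static problem of minimizing $\D(p_{0N}\|m_{0N})+\D(p_N\|\nu_N)$ over couplings with first marginal $\nu_0$, solve that by Lagrange duality to obtain $p^*_{0N}(x_0,x_N)=m_{0N}(x_0,x_N)\hat\varphi(0,x_0)\varphi(N,x_N)$, and then identify the dual variables with a solution of \eqref{eq:generalizedbridge}; I prefer the direct verification because it is shorter and yields uniqueness without invoking convexity.
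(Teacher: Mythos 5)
Your proof is correct and follows essentially the same route as the paper's: both rest on the Pythagorean identity $J(P)=J(\fP^*)+\D(P\|\fP^*)+\D(p_N\|p^*_N)$, which the paper derives at the level of transition probabilities (the modified index $J''$ in (\ref{secondmodifiedindex}), equivalent to yours by the chain rule for relative entropy) and you derive directly at the path level from the closed form $\fP^*(x)=\hat\varphi(0,x_0)\varphi(N,x_N)\prod_t m_{x_tx_{t+1}}(t)$. Your handling of the strict positivity of $\varphi(t,\cdot)$ and of the support bookkeeping is in fact somewhat more explicit than the paper's, which only records that positivity of $G$ forces $\varphi(0,\cdot)$ and $\hat\varphi(N,\cdot)$ to be positive and treats (\ref{Opttransition1}) as well defined throughout.
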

\begin{proof}
Let $\varphi(\cdot,\cdot)$ be {\em space-time harmonic} for the prior transition mechanism, namely satisfy on $0\le t\le N-1$ recursion (\ref{eq:generalizedbridgeA}).
Observe that since $G$ has all positive elements, $\hat{\varphi}(N,i)$ and $\varphi(0,i)$ are positive for all $i\in\mathcal X$. In particular, it then follows from (\ref{eq:generalizedbridgeD}) that $\varphi(N,i)=0$ if and only if $\nu_N(i)=0$. Minimizing $J(P)$ over ${\mathcal P}_S(\nu_0)$ is then equivalent to minimizing over ${\mathcal P}(\nu_0)$ the new index
\begin{align}\nonumber &J'(P):=J(P) + \sum_{x_0}\log\varphi (0,x_0)\nu_0(x_0)\\&\hspace{-.3cm}-\sum_{x_N}\log\varphi(N,x_N)p_N(x_N)+\sum_{x_N}\log\varphi(N,x_N)p_N(x_N),\label{modifiedindex}
\end{align}
where $p_N$ denotes the marginal of $P$ at time $N$ and we have used the convention $0\cdot\log 0=0$. Let $\pi_{ij}(t)$ be the transition probabilities of the measure $P\in {\mathcal P}_S(\nu_0)$. Then, using the multiplicative decomposition (\ref{prior}) for both measures we get the representation
\begin{align}\nonumber
\D({P}\|\fM)&=\D(\nu_0\|\mu_0)\\&+\sum_{k=0}^{N-1}\sum_{x_k}\D(\pi_{x_kx_{k+1}}(k)\|m_{x_kx_{k+1}}(k))p_k(x_k).\label{relentrdec2}
\end{align}
Since $\D(\nu_0\|\mu_0)$ is constant over ${\mathcal P}_S(\nu_0)$ and by the same calculation as in \cite[pp. 7-8]{PT}, we now get that Problem \ref{relbridge} is equivalent to minimizing over ${\mathcal P}_S(\nu_0)$
\begin{eqnarray}\nonumber \hspace*{-5pt}J''(P)&:=&\sum_{k=0}^{N-1}\sum_{x_k}\D\left(\pi_{x_{k}x_{k+1}}(k)\|m_{x_{k}x_{k+1}}(k)\phantom{\frac{\varphi}{\varphi}}\right.\\\nonumber
&&\left. \times \frac{\varphi(k+1,x_{k+1})}{\varphi(k,x_k)}\right)p_{k}(x_{k}) \\&&+\sum_{x_N}\log\left[\frac{p_N(x_N)\varphi(N,x_N)}{\nu_N(x_N)}\right]p_N(x_N).\label{secondmodifiedindex}
\end{eqnarray}
We next prove that
\begin{equation}\label{opttransition1}\pi^*_{ij}(t):=m_{ij}(t)\frac{\varphi(t+1,j)}{\varphi(t,i)}
\end{equation}
constitute a family of transition probabilities. Indeed, $\pi^*_{ij}(t)\ge 0$ and, by (\ref{eq:generalizedbridgeA}),
$$\sum_j \pi^*_{ij}(t)=\sum_jm_{ij}(t)\frac{\varphi(t+1,j)}{\varphi(t,i)}=\frac{\varphi(t,i)}{\varphi(t,i)}=1.
$$
Thus, the first term in the right-hand side of (\ref{secondmodifiedindex}) is nonnegative and we can make it equal to zero  by choosing as new transition probabilities precisely (\ref{opttransition1}). Consider now the probabilities $p^*(t,\cdot)$ defined by the recursion
\begin{equation}\label{FPopt}p^*(t+1,j)=\sum_i\pi^*_{ij}(t)p^*(t,i),\quad p^*(0,i)=\nu_0(i).
\end{equation}
Observe that the second term in the right-hand side of (\ref{secondmodifiedindex}) becomes zero if
\begin{equation}\label{bndcond}
\varphi(N,x_N)=\frac{\nu_N(x_N)}{p_N^*(x_N)}
\end{equation}
which is admissible as a boundary condition for (\ref{eq:generalizedbridgeA}) since it is nonnegative and we are only considering distributions in ${\mathcal P}(\nu_0)$ which satisfy (\ref{support}). With this choice of $\varphi(N,x_N)$, $\pi^*_{ij}(t)$ minimize $J''(P)$ over ${\mathcal P}_S(\nu_0)$.

Define
\begin{equation}\label{defhat}\hat{\varphi}(t,i):=\frac{p^*(t,i)}{\varphi(t,i)}.
\end{equation}
Using (\ref{FPopt}), (\ref{opttransition1}) and (\ref{defhat}), we get
\begin{eqnarray}\nonumber
\hat{\varphi}(t+1,j)&=&\frac{p^*(t+1,j)}{\varphi(t+1,j)}=\frac{\sum_im_{ij}(t)\frac{\varphi(t+1,j)}{\varphi(t,i)}p^*(t,i)}{\varphi(t+1,j)}\\&=&\sum_im_{ij}(t)\frac{p^*(t,i)}{\varphi(t,i)}=\sum_i\pi^*_{ij}(t)\hat{\varphi}(t,i),
\end{eqnarray}
namely $\hat{\varphi}(t,i)$ is {\em space-time co-harmonic} satisfying (\ref{eq:generalizedbridgeB}). While (\ref{defhat}) alone implies (\ref{eq:generalizedbridgeC}), (\ref{bndcond}) and (\ref{defhat}) imply that (\ref{eq:generalizedbridgeD}) is verified.
\end{proof}

In view of (\ref{defhat}), at each time $t=0,1,\ldots,N$  the marginal $p_t^*$ of the solution factorizes as
\begin{equation}\label{fact}
p^*(t,i)=\varphi(t,i)\hat{\varphi}(t,i).
\end{equation}

The final condition (\ref{eq:generalizedbridgeD}) for the Schr\"{o}dinger system  is different from the standard one, see e.g. \cite{CGPT2}. We get from (\ref{eq:generalizedbridgeD})
\begin{equation}\label{bndconditions2' }\varphi(N,x_N)=\sqrt{\frac{\nu_N(x_N)}{\hat{\varphi}(N,x_N)}}.
\end{equation}
Let $\varphi(t)$ and $\hat{\varphi}(t)$ denote the column vectors with entries $\varphi(t,i)$ and $\hat{\varphi}(t,i)$, respectively, with $i\in\mathcal X$. In matrix form, (\ref{eq:generalizedbridgeA}), (\ref{eq:generalizedbridgeB}) and (\ref{opttransition1}) read
\begin{subequations}\label{eq:notations}
\begin{equation}
\varphi(t)=M(t)\varphi(t+1),\; ~~\hat{\varphi}(t+1)=M(t)^T\hat{\varphi}(t),
\end{equation}
and
\begin{equation}\label{matrixtransition}
	\Pi(t)=[\pi_{ij}(t)]=\diag(\varphi(t))^{-1}M(t)\diag(\varphi(t+1)).
\end{equation}
\end{subequations}
Notice that the condition (\ref{bndcond}) involves $p_N^*$ which is defined through (\ref{FPopt}). The latter, in turn, depends on the transition probabilities (\ref{opttransition1}) which require the knowledge of $\varphi(t)$ for $t=0,1,\ldots,N-1$. Thus, it is not clear if the  whole procedure is well-posed.   This will be established in Section \ref{genSch} by proving that indeed  system (\ref{eq:generalizedbridge}) has a (unique) solution.

\section{Background: Hilbert's projective metric}\label{Hilbert}
This metric dates back to 1895 \cite{hilbert1895gerade}. A crucial contractivity result that permits to establish existence of solutions  of equations on cones (such as the Perron-Frobenius theorem)  was proven by Garrett Birkhoff  in 1957 \cite{birkhoff1957extensions}. Important extensions of Birkhoff's result to nonlinear maps were provided by Bushell \cite{bushell1973projective,bushell1973hilbert}. Various other applications of the Birkhoff-Bushell result have been developed such as to positive integral operators and to positive definite matrices \cite{bushell1973hilbert, lemmens2013birkhoff}. More recently, this geometry has proven useful in various problems concerning  communication and computations over networks (see \cite{tsitsiklis1986distributed} and the work of Sepulchre and collaborators \cite{sepulchre2010consensus,sepulchre2011contraction,BFS} on consensus in non-commutative spaces and metrics for spectral densities) and in statistical quantum theory \cite{reeb2011hilbert}. A recent survey on the applications in analysis is \cite{lemmens2013birkhoff}. The use of the Hilbert metric is crucial in the nonlinear Frobenius-Perron theory \cite{lemmens2012nonlinear}.
A considerable further extension of the Perron-Frobenius theory beyond linear positive systems and monotone systems has been recently proposed in \cite{FS}.

Taking advantage of the Birkhoff-Bushell results on contractivity of linear and nonliner maps on cones,  we showed in \cite{GP} that the Schr\"odinger bridge for Markov chains and quantum channels can be efficiently obtained from the fixed-point of a map which is contractive in the {\em Hilbert metric}. This result extended \cite{FL} which deals with scaling of nonnegative matrices. In \cite{CGP}, it was shown that a similar approach can be taken in the context of diffusion processes leading to i)  a new proof of a classical result on SBP and ii) providing an efficient computational scheme for both, SBP and OMT. This new computational approach can be effectively employed, for instance, in image interpolation.

Following \cite{bushell1973hilbert}, we recall some basic concepts and results of this theory.

Let $\cS$ be a real Banach space and let $\cK$ be a closed solid cone in $\cS$, i.e., $\cK$ is closed with nonempty interior ${\rm int}\cK$and is such that $\cK+\cK\subseteq \cK$, $\cK\cap -\cK=\{0\}$ as well as $\lambda \cK\subseteq \cK$ for all $\lambda\geq 0$. Define the partial order
\[
x\preceq y \Leftrightarrow y-x\in\cK,\quad x< y \Leftrightarrow y-x\in{\rm int}\cK
\]
and for $x,y\in\cK_0:=\cK\backslash \{0\}$, define
\begin{eqnarray*}
M(x,y)&:=&\inf\, \{\lambda\,\mid x\preceq \lambda y\}\\
m(x,y)&:=&\sup \{\lambda \mid \lambda y\preceq x \}.
\end{eqnarray*}
Then, the Hilbert metric is defined on $\cK_0$ by
\[
d_H(x,y):=\log\left(\frac{M(x,y)}{m(x,y)}\right).
\]
Strictly speaking, it is a {\em projective} metric since it is invariant under scaling by positive constants, i.e.,
$d_H(x,y)=d_H(\lambda x,\mu y)=d_H(x,y)$ for any $\lambda>0, \mu>0$ and $x,y\in{\rm int}\cK$. Thus, it is actually a  distance between rays. If $U$ denotes the unit sphere in $\cS$, $\left({\rm int}\cK\cap U,d_H\right)$ is a metric space.
\begin{ex}\label{ex1} Let $\cK=\R^n_+=\{x\in\R^n: x_i\ge 0\}$ be the positive orthant of $\R^n$. Then, for $x,y\in{\rm int}\R^n_+$, namely with all positive components,
\[M(x,y)=\max_i\{x_i/y_i\}, \quad m(x,y)=\min_i\{x_i/y_i\},
\]
and
\[d_H(x,y)=\log\max\{x_iy_j/y_ix_j\}.
\]
\end{ex}
Another very important example for applications in many diverse areas of statistics, information theory, control,etc. is the cone of Hermitian, positive semidefinite matrices.
\begin{ex}\label{ex2}
Let $\cS=\{X=X^\dagger\in\C^{n\times n}\}$, where $\dagger$ denotes here transposition plus conjugation and, more generally, adjoint. Let $\cK=\{X\in\cS: X\ge 0\}$ be the positive semidefinite matrices. Then, for $X,Y\in{\rm int}\cK$, namely positive definite, we have
\[d_H(X,Y)=\log\frac{\lambda_{\max}\left(XY^{-1}\right)}{\lambda_{\min}\left(XY^{-1}\right)}=\log\frac{\lambda_{\max}\left(Y^{-1/2}XY^{-1/2}\right)}{\lambda_{\min}\left(Y^{-1/2}XY^{-1/2}\right)}.
\]
It is closely connected to the Riemannian (Fisher-information) metric
\begin{eqnarray}\nonumber d_R(X,Y)&=&\|\log\left(Y^{-1/2}XY^{-1/2}\right)\|_{F}\\&=&
\sqrt{\sum_{i=1}^n[\log\lambda_i\left(Y^{-1/2}XY^{-1/2}\right)]^2}.\nonumber
\end{eqnarray}
\end{ex}

A map $\cE:\cK\rightarrow\cK$ is called {\em non-negative}. It is called {\em positive} if $\cE:{\rm int}\cK\rightarrow{\rm int}\cK$. If $\cE$ is positive and $\cE(\lambda x)=\lambda^p\cE(x)$ for all $x\in{\rm int}\cK$ and positive $\lambda$, $\cE$ is called {\em positively homogeneous of degree $p$} in ${\rm int}\cK$.
For a positive map $\cE$, the {\em projective diameter} is befined by
\begin{eqnarray*}
\Delta(\cE):=\sup\{d_H(\cE(x),\cE(y))\mid x,y\in {\rm int}\cK\}
\end{eqnarray*}
and the {\em contraction ratio} by
\begin{eqnarray*}
k(\cE):=\inf\{\lambda: \mid d_H(\cE(x),\cE(y))\leq \lambda d_H(x,y),\forall x,y\in{\rm int}\cK\}.
\end{eqnarray*}
Finally, a map $\cE:{\cS}\rightarrow\cS$ is called {\em monotone increasing} if $x\le y$ implies $\cE(x)\le\cE(y)$.
\begin{thm}[\cite{bushell1973hilbert}] \label{poshom}Let $\cE$ be a monotone increasing positive mapping which is positive
homogeneous of degree $p$ in ${\rm int}\cK$. Then the contraction $k(\cE)$ does not exceed $p$. In particular, if $\cE$ is a positive linear mapping, $k(\cE)\le1$.
\end{thm}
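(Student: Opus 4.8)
The plan is to use directly the two structural hypotheses on $\cE$ — monotonicity and positive homogeneity of degree $p$ — to sandwich $\cE(x)$ between scalar multiples of $\cE(y)$, and then read off the bound on $d_H$. First I would record the elementary facts that underlie the definitions: for $x,y\in{\rm int}\cK$ the quantities $m(x,y)$ and $M(x,y)$ are finite and strictly positive (this uses that $\cK$ is solid), and — crucially — the infimum and supremum defining them are \emph{attained}, so that $m(x,y)\,y\preceq x\preceq M(x,y)\,y$ holds as a genuine pair of order relations; this last point is exactly where closedness of $\cK$ is used. Write $\alpha:=m(x,y)>0$ and $\beta:=M(x,y)<\infty$, so that $\alpha y\preceq x\preceq\beta y$ and $d_H(x,y)=\log(\beta/\alpha)$.

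Next I would apply $\cE$. Monotone increasingness gives $\cE(\alpha y)\preceq\cE(x)\preceq\cE(\beta y)$, and positive homogeneity of degree $p$ — legitimate because $\alpha>0$ and $y\in{\rm int}\cK$, hence $\alpha y\in{\rm int}\cK$ — converts this into
\[
\alpha^{p}\,\cE(y)\;\preceq\;\cE(x)\;\preceq\;\beta^{p}\,\cE(y).
\]
Since $\cE$ is positive, $\cE(x),\cE(y)\in{\rm int}\cK$, so $d_H(\cE(x),\cE(y))$ is well defined; and by the very definitions of $M$ and $m$, the left inequality forces $m(\cE(x),\cE(y))\ge\alpha^{p}$ while the right one forces $M(\cE(x),\cE(y))\le\beta^{p}$. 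Hence
\[
d_H(\cE(x),\cE(y))=\log\frac{M(\cE(x),\cE(y))}{m(\cE(x),\cE(y))}\le\log\frac{\beta^{p}}{\alpha^{p}}=p\log\frac{\beta}{\alpha}=p\,d_H(x,y).
\]
As $x,y\in{\rm int}\cK$ were arbitrary, the definition of the contraction ratio yields $k(\cE)\le p$.

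Finally, for the linear case I would note that a positive linear map is in particular non-negative ($\cE(\cK)\subseteq\cK$), so for $x\preceq y$ linearity gives $\cE(y)-\cE(x)=\cE(y-x)\in\cK$, i.e.\ $\cE(x)\preceq\cE(y)$; thus $\cE$ is monotone increasing, and linearity makes it positively homogeneous of degree $p=1$, so the first part gives $k(\cE)\le 1$.

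I do not expect a serious obstacle: the argument is essentially the one-line sandwich above. The only point demanding care is the attainment of the extrema defining $m(x,y)$ and $M(x,y)$ — so that one works with true order relations rather than approximate ones — together with checking that $\alpha$ is strictly positive, which is what makes the invocation of homogeneity (valid only at interior points) legitimate.
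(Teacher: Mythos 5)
Your proof is correct; the paper states this result without proof, citing Bushell (1973), and your sandwich argument $\alpha y\preceq x\preceq\beta y\;\Rightarrow\;\alpha^{p}\,\cE(y)\preceq\cE(x)\preceq\beta^{p}\,\cE(y)$, followed by reading off $M$ and $m$ for the pair $\cE(x),\cE(y)$, is precisely the standard argument from that reference, with the delicate points (attainment of the extrema via closedness of $\cK$, strict positivity of $\alpha$ and finiteness of $\beta$ via solidness) correctly identified. The only step worth flagging is the claim that a positive linear map satisfies $\cE(\cK)\subseteq\cK$: this requires continuity of $\cE$ together with closedness of $\cK$ (approximate a boundary point of $\cK$ by interior points), or else should simply be taken as part of the definition of a positive linear mapping, as it is in Bushell's paper.
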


\begin{thm}[\cite{{birkhoff1957extensions},bushell1973hilbert}]\label{BBcontraction}
Let $\cE$ be a positive linear map. Then
\begin{equation}\label{condiam}
k(\cE)=\tanh(\frac{1}{4}\Delta(\cE)).
\end{equation}
\end{thm}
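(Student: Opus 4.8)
The plan is to follow the classical Birkhoff--Bushell argument, reducing the evaluation of $k(\cE)$ to a single one-parameter optimization that produces the $\tanh$. Write $\Delta:=\Delta(\cE)$. If $\Delta=+\infty$ the asserted identity reads $k(\cE)=1$; here $k(\cE)\le 1$ is Theorem~\ref{poshom}, and the matching lower bound comes from pairs of rays approaching the same face of $\partial\cK$, along which the contraction ratio of a positive linear map tends to $1$. So from now on assume $\Delta<\infty$, and I would first record the auxiliary fact that then $\cE$ maps $\cK\setminus\{0\}$ into ${\rm int}\,\cK$: a $d_H$-bounded subset of ${\rm int}\,\cK$ has, after normalizing rays, compact closure inside ${\rm int}\,\cK$, so approximating $p\in\cK\setminus\{0\}$ by interior points --- whose images stay in one such compact set --- forces $\cE p\in{\rm int}\,\cK$.

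For the upper bound $k(\cE)\le\tanh(\tfrac14\Delta)$, fix $x,y\in{\rm int}\,\cK$ that are not proportional (otherwise $d_H(x,y)=0$), set $\alpha:=m(x,y)$, $\beta:=M(x,y)$, so $0<\alpha<\beta<\infty$ and $d_H(x,y)=\log(\beta/\alpha)$, and put $p:=\beta y-x$, $q:=x-\alpha y$; these are nonzero elements of $\cK$ with $(\beta-\alpha)x=\alpha p+\beta q$ and $(\beta-\alpha)y=p+q$. Applying $\cE$ and using scale-invariance of $d_H$, it suffices to bound $d_H(\alpha\,\cE p+\beta\,\cE q,\ \cE p+\cE q)$; by the auxiliary fact $\cE p,\cE q\in{\rm int}\,\cK$, so with $a:=m(\cE p,\cE q)$, $b:=M(\cE p,\cE q)$ we have $a\,\cE q\preceq\cE p\preceq b\,\cE q$ and $\log(b/a)=d_H(\cE p,\cE q)\le\Delta$. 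Multiplying these cone inequalities by suitable positive scalars and using transitivity gives, with the strictly decreasing function $f(z):=\frac{\alpha z+\beta}{z+1}$, that $M(\alpha\,\cE p+\beta\,\cE q,\ \cE p+\cE q)\le f(a)$ and $m(\alpha\,\cE p+\beta\,\cE q,\ \cE p+\cE q)\ge f(b)$, hence
\[
d_H(\cE x,\cE y)\ \le\ \log\frac{f(a)}{f(b)}\ =\ \log\frac{(a+\sigma)(b+1)}{(a+1)(b+\sigma)},\qquad \sigma:=\beta/\alpha .
\]
Putting $\rho:=b/a=e^{d_H(\cE p,\cE q)}\le e^{\Delta}$ and maximizing the right-hand side over $a>0$ with $b=\rho a$ (critical point $a=\sqrt{\sigma/\rho}$) yields $d_H(\cE x,\cE y)\le 2\log\frac{1+\sqrt{\sigma\rho}}{\sqrt\sigma+\sqrt\rho}\le 2\log\frac{1+\sqrt\sigma\,e^{\Delta/2}}{\sqrt\sigma+e^{\Delta/2}}$, the last step because the middle expression increases in $\rho$. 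Finally the elementary inequality $2\log\frac{1+\sqrt\sigma\,e^{\Delta/2}}{\sqrt\sigma+e^{\Delta/2}}\le\tanh(\tfrac14\Delta)\log\sigma$ follows, on substituting $\sigma=e^{2s}$, from $\log\frac{\cosh(\tfrac s2+\tfrac\Delta4)}{\cosh(\tfrac s2-\tfrac\Delta4)}\le s\tanh\tfrac\Delta4$, which holds because the difference of its two sides vanishes together with its first derivative at $s=0$ and is convex in $s$. This proves $k(\cE)\le\tanh(\tfrac14\Delta)$ (and incidentally that the bound is never attained by a fixed pair, only approached as $d_H(x,y)\to0$).

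For sharpness, $k(\cE)\ge\tanh(\tfrac14\Delta)$ (trivial if $\Delta=0$), I would pick $u_0,v_0\in{\rm int}\,\cK$ with $d_H(\cE u_0,\cE v_0)=:D$ as close to $\Delta$ as desired and restrict $\cE$ to the two-dimensional cone $C:={\rm span}(u_0,v_0)\cap\cK\cong\R^2_+$, obtaining a positive linear map $\cE'$ into a two-dimensional cone inside ${\rm span}(\cE u_0,\cE v_0)\cap\cK\cong\R^2_+$. Since the Hilbert metric of $\cK$ restricted to a subspace coincides with that of the induced cone, $k(\cE')\le k(\cE)$ and $D\le\Delta(\cE')\le\Delta$. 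In this two-dimensional model the estimate just established is sharp: for $x,y\in{\rm int}\,\R^2_+$ the elements $M(x,y)y-x$ and $x-m(x,y)y$ always sit on the two extreme rays, so $\cE'p,\cE'q$ are fixed rays, the chain of order inequalities becomes an equality, and $a$ as well as $\sigma$ may be prescribed freely by the choice of $x,y$; taking $a=\sqrt{\sigma/\rho_0}$ with $\rho_0:=b/a=e^{\Delta(\cE')}$ and letting $\sigma\to1^{+}$ drives $d_H(\cE'x,\cE'y)/d_H(x,y)$ to $\tanh(\tfrac14\Delta(\cE'))$. Hence $k(\cE)\ge k(\cE')\ge\tanh(\tfrac14\Delta(\cE'))\ge\tanh(\tfrac14 D)$, and letting $D\uparrow\Delta$ closes the argument together with the upper bound.

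I expect the main obstacle to be sharpness --- in particular recognizing that one should pass to a two-dimensional sub-cone spanned by near-maximizers of the projective diameter, where the cone inequalities turn into equalities and explicit collapsing families realize the ratio $\tanh(\tfrac14\Delta(\cE'))$ --- together with the attendant boundary bookkeeping, namely that finite projective diameter forces $\cE(\cK\setminus\{0\})\subseteq{\rm int}\,\cK$, which is what legitimizes $d_H(\cE p,\cE q)\le\Delta$ although $p,q$ lie on $\partial\cK$. By comparison the upper bound is a mechanical sequence of cone manipulations followed by one elementary optimization and one convexity estimate.
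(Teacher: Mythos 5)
The paper does not prove this statement: Theorem~\ref{BBcontraction} is quoted verbatim from Birkhoff and Bushell, with a citation in place of a proof. Your argument is a faithful reconstruction of the standard Birkhoff--Bushell proof --- the decomposition $(\beta-\alpha)x=\alpha p+\beta q$, $(\beta-\alpha)y=p+q$ with $p=\beta y-x$, $q=x-\alpha y$, the bound $d_H(\cE x,\cE y)\le\log\bigl(f(a)/f(b)\bigr)$, the optimization at $a=\sqrt{\sigma/\rho}$, the $\cosh$-quotient inequality, and the two-dimensional restriction for sharpness --- and the computations check out. The only soft spot is the opening remark that when $\Delta(\cE)=\infty$ the lower bound $k(\cE)\ge 1$ ``comes from pairs of rays approaching the same face of $\partial\cK$,'' which as stated is not an argument; but this is harmless, since your own two-dimensional sharpness construction already gives $k(\cE)\ge\tanh(\tfrac14 D)$ for every finite $D<\Delta(\cE)$ and hence covers the infinite-diameter case by letting $D\to\infty$.
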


\begin{thm}[\cite{bushell1973hilbert}]\label{BBcontraction2}
Let $\cE$ be either
\begin{description}
\item  [a.] a monotone increasing positive mapping which is positive homogeneous of degree $p (0<p<1)$ in ${\rm int}\cK$, or
\item [b.] a positive linear mapping with finite projective diameter.
\end{description}
Suppose the metric space $Y=\left({\rm int}\cK\cap U,d_H\right)$ is complete. Then, in case $(a)$ there exists a unique $x\in{\rm int}\cK$ such that $\cE(x)=x$, in case $(b)$ there exists a unique positive eigenvector of $\cE$ in $Y$.
\end{thm}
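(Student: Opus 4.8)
The plan is to reduce both cases to the contraction mapping principle applied to a renormalized version of $\cE$ on the complete metric space $Y=\left({\rm int}\cK\cap U,d_H\right)$. Because $d_H$ is only a \emph{projective} metric, $\cE$ itself need not map the unit sphere $U$ into itself, so the first step is to introduce the normalized map $\tilde\cE(x):=\cE(x)/\|\cE(x)\|$. For $x\in{\rm int}\cK$, positivity of $\cE$ gives $\cE(x)\in{\rm int}\cK$, hence $\cE(x)\neq 0$ and $\tilde\cE$ is well defined; moreover $\tilde\cE(x)\in{\rm int}\cK\cap U=Y$ since ${\rm int}\cK$ is invariant under multiplication by positive scalars. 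Thus $\tilde\cE:Y\to Y$, and by the scaling invariance of $d_H$ we have $d_H(\tilde\cE(x),\tilde\cE(y))=d_H(\cE(x),\cE(y))$, so $\tilde\cE$ is Lipschitz on $Y$ with the same constant $k(\cE)$ as $\cE$.

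Next I would check that $k(\cE)<1$ in both cases. In case $(a)$, $\cE$ is monotone increasing, positive, and positively homogeneous of degree $p\in(0,1)$, so Theorem~\ref{poshom} gives $k(\cE)\le p<1$. In case $(b)$, $\cE$ is a positive linear map with $\Delta(\cE)<\infty$, so Theorem~\ref{BBcontraction} gives $k(\cE)=\tanh(\tfrac14\Delta(\cE))<1$. In either case $\tilde\cE$ is a strict contraction of the complete metric space $Y$, and the contraction mapping principle yields a unique $\bar x\in Y$ with $\tilde\cE(\bar x)=\bar x$, equivalently $\cE(\bar x)=\lambda\bar x$ with $\lambda:=\|\cE(\bar x)\|>0$. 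This already settles case $(b)$: $\bar x$ is a positive eigenvector of $\cE$, and if $\cE(y)=\mu y$ for some $y\in{\rm int}\cK$ then necessarily $\mu>0$ (since $\cE(y)\in{\rm int}\cK$), so $\tilde\cE(y/\|y\|)=y/\|y\|$, whence $y/\|y\|=\bar x$ and $y$ is a positive multiple of $\bar x$; thus the positive eigenvector is unique in $Y$.

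For case $(a)$ it remains to upgrade the eigenvector $\bar x$ to a genuine fixed point, and this is the only place where $p\neq 1$ is used. Seeking a fixed point of the form $x=c\bar x$ with $c>0$, positive homogeneity gives $\cE(c\bar x)=c^{p}\cE(\bar x)=c^{p}\lambda\bar x=c^{\,p-1}\lambda\,(c\bar x)$, so $c\bar x$ is fixed precisely when $c^{\,1-p}=\lambda$, i.e. $c=\lambda^{1/(1-p)}$, which is well defined and positive since $\lambda>0$ and $1-p>0$. This produces a fixed point $x^\ast:=\lambda^{1/(1-p)}\bar x\in{\rm int}\cK$. For uniqueness, if $\cE(x_1)=x_1$ and $\cE(x_2)=x_2$ with $x_1,x_2\in{\rm int}\cK$, then $d_H(x_1,x_2)=d_H(\cE(x_1),\cE(x_2))\le p\,d_H(x_1,x_2)$ forces $d_H(x_1,x_2)=0$, hence $x_1=\mu x_2$ for some $\mu>0$; then $x_1=\cE(x_1)=\mu^{p}\cE(x_2)=\mu^{\,p-1}x_1$ yields $\mu^{\,p-1}=1$, so $\mu=1$ and $x_1=x_2$.

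The only real obstacle is conceptual rather than computational: one must recognize that the Hilbert metric is projective, so Banach's theorem has to be run on the normalized map $\tilde\cE$ over $Y$ (where completeness is assumed), not on $\cE$ over ${\rm int}\cK$; and one must notice that homogeneity of degree $p\neq 1$ permits rescaling the eigenvector into an actual fixed point — a step that genuinely fails in the linear case, which is exactly why case $(b)$ delivers only an eigenvector. Beyond this, the argument is a direct combination of Theorems~\ref{poshom} and~\ref{BBcontraction} with the contraction mapping principle.
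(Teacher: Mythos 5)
The paper states this theorem without proof, as a background result quoted from Bushell's 1973 paper, so there is no in-paper argument to compare against; your proof is the standard Birkhoff--Bushell argument and is correct. You correctly identify and handle the two essential points: Banach's principle must be applied to the normalized map $\tilde\cE(x)=\cE(x)/\|\cE(x)\|$ on the complete space $Y$ (since $d_H$ is only projective), with the strict contraction coming from Theorem~\ref{poshom} in case (a) and Theorem~\ref{BBcontraction} in case (b); and the degree-$p$ homogeneity with $p<1$ is exactly what lets you rescale the resulting eigenvector $\cE(\bar x)=\lambda\bar x$ by $c=\lambda^{1/(1-p)}$ into a genuine fixed point and forces uniqueness, whereas the linear case can only yield a unique eigendirection.
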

This result provides a far-reaching generalization of the celebrated Perron-Frobenius theorem \cite{birkhoff1962Perron-Frobenius}. Notice that in both Examples \ref{ex1} and \ref{ex2}, the space $Y=\left({\rm int}\cK\cap U,d_H\right)$ is indeed complete \cite{bushell1973hilbert}.

There are  other metrics which are contracted by positive monotone maps. For instance, the closely related {\em Thompson metric} \cite{Tho}
\[d_T(x,y)=\log\max\{M(x,y),m^{-1}(x,y)\}.
\]
The Thompson metric is a {\em bona fide} metric on $\cK$. It has been, for instance, employed in \cite{LW,CPSV,BFS}.

\section{Solution to the generalized Schr\"{o}dinger system}\label{genSch}
Let $G=M(N-1) M(N-2)\cdots M(1) M(0)$ and assume that all its elements  $g_{ij}$ are positive. Let us introduce the following maps on $\R^n_+$:
\begin{eqnarray*}
\cE\;:\; &x\mapsto y&=\sum_jg_{ij}x_j,\\
\cE^\dagger \;:\; &x\mapsto y&=\sum_ig_{ij}x_i,\\
\cD_0\;:\;&x\mapsto y&= \frac{\nu_0}{x}\\
\cD_N\;:\;&x\mapsto y&= \frac{\nu_N}{x}
\end{eqnarray*}
where division of vectors is componentwise\footnote{Our use of the adjoint for the map $\cE$ is consistent with the standard notation in diffusion processes where the Fokker-Planck (forward) equation involves the adjoint of the {\em generator} appearing in the backward Kolmogorov equation.}.

\begin{lemma}\label{linear}
Consider the maps $\cE$ and $\cE^\dagger$. We have the following bounds on their contraction ratios:
\begin{equation}\label{strictcon}
k(\cE)=k(\cE^\dagger)=\tanh(\frac{1}{4}\Delta(\cE))<1.
\end{equation}
\end{lemma}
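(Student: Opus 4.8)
The plan is to apply Theorem~\ref{BBcontraction} directly, since $\cE$ and $\cE^\dagger$ are both positive linear maps on the positive orthant cone $\cK=\R^n_+$ (positivity in the sense that they map $\mathrm{int}\,\R^n_+$ into itself, which follows immediately from the hypothesis that all entries $g_{ij}$ are strictly positive). By Theorem~\ref{BBcontraction}, for any positive linear map one has $k(\cE)=\tanh\bigl(\tfrac14\Delta(\cE)\bigr)$, and likewise for $\cE^\dagger$. So two things remain: to show the projective diameters of $\cE$ and $\cE^\dagger$ coincide (giving $k(\cE)=k(\cE^\dagger)$), and to show this common diameter is finite (giving the strict inequality $\tanh(\tfrac14\Delta)<1$, since $\tanh$ is strictly less than $1$ on any finite argument).

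First I would compute the projective diameter of $\cE$ explicitly using the formula for $d_H$ on $\R^n_+$ from Example~\ref{ex1}: for $x,y\in\mathrm{int}\,\R^n_+$, $d_H(\cE x,\cE y)=\log\max_{k,\ell}\frac{(\cE x)_k (\cE y)_\ell}{(\cE y)_k (\cE x)_\ell}$. Since each $(\cE x)_k=\sum_j g_{kj}x_j$ is a positive combination of the $x_j$, a standard estimate (the mediant / ratio-of-sums inequality) gives $\frac{(\cE x)_k}{(\cE x)_\ell}\le \max_{p,q}\frac{g_{kp}}{g_{\ell q}}\cdot\frac{\max_j x_j}{\min_j x_j}$-type bounds; carrying this through, the supremum over $x,y$ is attained in the limit of extreme points and yields the classical Birkhoff formula $\Delta(\cE)=\log\max_{i,j,k,\ell}\frac{g_{ik}g_{j\ell}}{g_{i\ell}g_{jk}}$. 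Because every $g_{ij}>0$ and the index set is finite, this maximum is a finite real number, hence $\Delta(\cE)<\infty$. The same computation applied to $\cE^\dagger$, whose matrix is $G^T=(g_{ji})$, gives $\Delta(\cE^\dagger)=\log\max_{i,j,k,\ell}\frac{g_{ki}g_{\ell j}}{g_{\ell i}g_{kj}}$, which is the same expression after relabeling indices; thus $\Delta(\cE)=\Delta(\cE^\dagger)$.

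Combining: $k(\cE)=\tanh(\tfrac14\Delta(\cE))=\tanh(\tfrac14\Delta(\cE^\dagger))=k(\cE^\dagger)$, and since $0\le \Delta(\cE)<\infty$ we have $\tanh(\tfrac14\Delta(\cE))<1$, which is exactly \eqref{strictcon}.

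The only mild subtlety — and the step I would be most careful about — is the evaluation of the projective diameter: one must check both that the Birkhoff bound $d_H(\cE x,\cE y)\le \log\max_{i,j,k,\ell}\frac{g_{ik}g_{j\ell}}{g_{i\ell}g_{jk}}$ actually holds for all $x,y\in\mathrm{int}\,\R^n_+$, and that it is tight (by choosing $x,y$ approaching suitable vertices of the simplex, e.g. $x\to e_p$, $y\to e_q$ for the maximizing indices), so that the supremum defining $\Delta(\cE)$ is genuinely this finite quantity rather than merely bounded by it. Strictly, for the statement \eqref{strictcon} only finiteness of $\Delta(\cE)$ and the identity $\Delta(\cE)=\Delta(\cE^\dagger)$ are needed, so even if one only proves the upper bound and its symmetry under $G\mapsto G^T$, the conclusion follows; the exact value is a bonus. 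No result beyond Example~\ref{ex1} and Theorem~\ref{BBcontraction} is required.
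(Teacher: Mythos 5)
Your proposal is correct and follows essentially the same route as the paper: both invoke Theorem~\ref{BBcontraction} for the positive linear maps $\cE$ and $\cE^\dagger$, identify the projective diameter with the classical Birkhoff expression $\log\max_{i,j,k,\ell}\frac{g_{ij}g_{k\ell}}{g_{i\ell}g_{kj}}$, and conclude finiteness (hence $\tanh(\tfrac14\Delta)<1$) from the strict positivity of all entries of $G$. Your additional care in verifying tightness of the diameter formula and the invariance of this expression under $G\mapsto G^T$ only fleshes out what the paper dispatches with ``similarly for the adjoint map.''
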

\begin{proof}
Observe that $\cE$ is a positive {\em linear} map and its projective diameter is
\begin{eqnarray*}
\Delta(\cE)&=&\sup\{d_H(\cE(x),\cE(y)) \mid x_i>0,\,y_i>0\}\\
&=&\sup\{\log\left(\frac{g_{ij}g_{k\ell}}{g_{i\ell}g_{kj}}\right)\mid1\leq  i,j,k,\ell\leq n\}.
\end{eqnarray*}
It is finite since all entries $g_{ij}$'s are positive. It now follows from Theorem \ref{BBcontraction} that its contraction ratio satisfies (\ref{strictcon}). Similarly for the adjoint map $\cE^\dagger$. \end{proof}
\begin{lemma}\label{inversion}
\[k(\cD_0)\le1,\quad k(\cD_N)\le1\]
\end{lemma}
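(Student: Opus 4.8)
The plan is to show that $\cD_0$ and $\cD_N$ are not merely non-expansive but in fact \emph{isometries} for the Hilbert metric on ${\rm int}\,\R^n_+$, which gives $k(\cD_0)=k(\cD_N)=1$ and in particular the stated bounds. It suffices to treat $\cD_0$; the argument for $\cD_N$ is verbatim the same with $\nu_N$ replacing $\nu_0$.

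First I would factor $\cD_0 = L_{\nu_0}\circ\iota$, where $\iota\colon x\mapsto 1/x$ is componentwise inversion and $L_{\nu_0}\colon x\mapsto \nu_0\cdot x$ is componentwise (Hadamard) multiplication by the fixed positive vector $\nu_0$. Using the explicit formulas of Example~\ref{ex1}, namely $M(x,y)=\max_i x_i/y_i$ and $m(x,y)=\min_i x_i/y_i$ on the orthant, one checks directly that $M(\iota x,\iota y)=\max_i y_i/x_i = 1/m(x,y)$ and $m(\iota x,\iota y)=\min_i y_i/x_i = 1/M(x,y)$, so $d_H(\iota x,\iota y)=\log\big(M(x,y)/m(x,y)\big)=d_H(x,y)$: $\iota$ is an isometry. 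Likewise $M(L_{\nu_0}x,L_{\nu_0}y)=\max_i (\nu_{0,i}x_i)/(\nu_{0,i}y_i)=\max_i x_i/y_i=M(x,y)$, and the same cancellation gives $m(L_{\nu_0}x,L_{\nu_0}y)=m(x,y)$, so $L_{\nu_0}$ is an isometry too. Composing, $d_H(\cD_0 x,\cD_0 y)=d_H(x,y)$ for all $x,y\in{\rm int}\,\R^n_+$, hence the contraction ratio of $\cD_0$ is exactly $1$, so $k(\cD_0)\le 1$.

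There is essentially no obstacle; the only point deserving a remark is that $\cD_0$ must be a well-defined positive map, i.e.\ $\nu_0$ (resp.\ $\nu_N$) should have strictly positive entries so that $\nu_0/x\in{\rm int}\,\R^n_+$ and the cancellation above is legitimate — this holds under the standing positivity assumptions, and on the support of a degenerate marginal one restricts to the corresponding coordinate subspace. Note also that, in contrast to the linear maps $\cE,\cE^\dagger$ of Lemma~\ref{linear}, the map $\cD_0$ is order-\emph{reversing}, so Theorem~\ref{poshom} does not apply; it is precisely the isometry computation above that is needed, and it is exactly what will let us control the contraction ratio of the composed map whose fixed point solves the generalized Schr\"odinger system.
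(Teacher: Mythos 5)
Your proof is correct: the paper itself gives no argument for this lemma, deferring to \cite[p.~033301-10]{GP}, and your direct computation --- componentwise inversion is a Hilbert-metric isometry because it swaps $M(x,y)$ and $1/m(x,y)$, and Hadamard multiplication by a fixed positive vector cancels out of both $M$ and $m$ --- is precisely the argument behind that citation. Your side remarks (that positivity of $\nu_0$, $\nu_N$ is needed for well-definedness, and that Theorem~\ref{poshom} does not apply since $\cD_0$ is order-reversing and homogeneous of degree $-1$) are both accurate and worth keeping.
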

\begin{proof}See \cite[p.033301-10]{GP}.\end{proof}
\begin{lemma}\label{sqrt} Let $\mathcal R:\R^n_+\rightarrow\R^n_+$ be the map which associates to the vector $x$ with components $x_i$ to the vector with components $\sqrt{x_i}$. Then
\begin{equation}\label{CRSR}
k(\mathcal R)=1/2.
\end{equation}
\end{lemma}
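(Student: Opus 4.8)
The plan is to compute the contraction ratio of $\mathcal R$ directly from the definition of the Hilbert metric on $\R^n_+$ as recorded in Example~\ref{ex1}. Recall that for $x,y\in{\rm int}\R^n_+$ we have $d_H(x,y)=\log\max_{i,j}\{x_iy_j/y_ix_j\}$. Applying $\mathcal R$ componentwise replaces each $x_i$ by $\sqrt{x_i}$, so
\[
d_H(\mathcal R(x),\mathcal R(y))=\log\max_{i,j}\left\{\frac{\sqrt{x_i}\sqrt{y_j}}{\sqrt{y_i}\sqrt{x_j}}\right\}
=\frac12\,\log\max_{i,j}\left\{\frac{x_iy_j}{y_ix_j}\right\}=\frac12\,d_H(x,y),
\]
where the square root can be pulled out of the max because $t\mapsto\sqrt{t}$ is monotone increasing on the nonnegative reals. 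This identity holds for every pair $x,y\in{\rm int}\R^n_+$, so $\mathcal R$ satisfies $d_H(\mathcal R(x),\mathcal R(y))=\tfrac12 d_H(x,y)$ exactly, not merely as an inequality.

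From this exact relation the contraction ratio follows immediately: by the definition $k(\mathcal R)=\inf\{\lambda: d_H(\mathcal R(x),\mathcal R(y))\le\lambda\,d_H(x,y)\ \forall x,y\}$, the value $\lambda=\tfrac12$ is admissible, so $k(\mathcal R)\le\tfrac12$; and since $d_H(\mathcal R(x),\mathcal R(y))=\tfrac12 d_H(x,y)$ with the right-hand side strictly positive whenever $x$ and $y$ are not proportional, no $\lambda<\tfrac12$ can work, giving $k(\mathcal R)\ge\tfrac12$. Hence $k(\mathcal R)=1/2$. (Alternatively, one may invoke Theorem~\ref{poshom}: $\mathcal R$ is monotone increasing, positive, and positively homogeneous of degree $p=1/2$, so $k(\mathcal R)\le 1/2$; but the direct computation is cleaner and also yields the matching lower bound.)

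I do not anticipate any serious obstacle here; the only point requiring a word of care is the interchange of $\sqrt{\cdot}$ with the maximum over index pairs, which is justified by monotonicity of the square root, and the observation that the metric is projective so one need not worry about the normalization to the unit sphere $U$. The whole argument is a one-line computation once Example~\ref{ex1} is in hand.
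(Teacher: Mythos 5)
Your proof is correct and follows essentially the same route as the paper: both compute $d_H(\mathcal R(x),\mathcal R(y))$ directly from the formula in Example~\ref{ex1} and pull the square root out of the maximum to obtain the exact identity $d_H(\mathcal R(x),\mathcal R(y))=\tfrac12 d_H(x,y)$. Your added remark that the exact equality also furnishes the matching lower bound $k(\mathcal R)\ge\tfrac12$ is a small but welcome refinement of the paper's one-line argument.
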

\begin{proof} Let $x,y\in{\rm int}\R^n_+$. In view of Example\ref{ex1} and using the properties of the square root,
\[d_H(\mathcal R(x),\mathcal R(y))\hspace*{-2pt}=\hspace*{-2pt}\log\max\{\sqrt{(x_iy_j/y_ix_j)}\}\hspace*{-2pt}=\hspace*{-2pt}(1/2)d_H(x,y).
\]
\end{proof}

\begin{thm}\label{compothm} The composition
\begin{equation}\label{composition}\cC:=\cE^\dagger\circ\cD_0\circ \cE\circ \mathcal R\circ\cD_N
\end{equation}
contracts the Hilbert metric with contraction ratio $k(\cC)<(1/2)$, namely
\[
d_H(\cC(x),\cC(y))<(1/2)d_H(x,y), \quad \forall x,y\in{\rm int}\R^n_+.
\]
\end{thm}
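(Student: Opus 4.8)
The plan is to exploit the multiplicative/submultiplicative behavior of the contraction ratio under composition of maps, together with the individual bounds already established in Lemmas \ref{linear}, \ref{inversion} and \ref{sqrt}. Recall the elementary fact that if $f$ and $g$ are maps between (subsets of) cones equipped with the Hilbert metric, then $k(f\circ g)\le k(f)\,k(g)$: this is immediate from the definition of the contraction ratio, since $d_H(f(g(x)),f(g(y)))\le k(f)\,d_H(g(x),g(y))\le k(f)k(g)\,d_H(x,y)$. First I would apply this repeatedly to the five-fold composition $\cC=\cE^\dagger\circ\cD_0\circ\cE\circ\mathcal R\circ\cD_N$, obtaining
\[
k(\cC)\le k(\cE^\dagger)\,k(\cD_0)\,k(\cE)\,k(\mathcal R)\,k(\cD_N).
\]

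Next I would substitute the known values and bounds: $k(\cE^\dagger)=k(\cE)=\tanh(\tfrac14\Delta(\cE))<1$ by Lemma \ref{linear}, $k(\cD_0)\le1$ and $k(\cD_N)\le1$ by Lemma \ref{inversion}, and $k(\mathcal R)=1/2$ by Lemma \ref{sqrt}. Multiplying these together gives
\[
k(\cC)\le \tanh\!\left(\tfrac14\Delta(\cE)\right)^2\cdot 1\cdot 1\cdot\tfrac12 < \tfrac12,
\]
where the last strict inequality uses that $\Delta(\cE)$ is finite (because $G$ has all strictly positive entries), hence $\tanh(\tfrac14\Delta(\cE))<1$, so its square is also strictly less than $1$. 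This yields $k(\cC)<1/2$, and then $d_H(\cC(x),\cC(y))\le k(\cC)\,d_H(x,y)<(1/2)d_H(x,y)$ for all $x,y\in{\rm int}\R^n_+$ with $d_H(x,y)>0$ (the case $d_H(x,y)=0$ being trivial). One should also note in passing that each map in the chain does send ${\rm int}\R^n_+$ into ${\rm int}\R^n_+$ — $\cE$ and $\cE^\dagger$ because $G$ is entrywise positive, $\cD_0,\cD_N$ and $\mathcal R$ by inspection — so the composition is well-defined as a positive map and the chaining of Hilbert-metric estimates is legitimate.

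I do not expect a serious obstacle here; the argument is essentially bookkeeping once the submultiplicativity of $k(\cdot)$ under composition is invoked. The only point requiring a little care is making sure the strict inequality is genuinely strict: it is the factor $k(\mathcal R)=1/2$ together with $k(\cE)<1$ that forces $k(\cC)$ strictly below $1/2$, whereas using only $k(\cE)\le1$ would give the non-strict bound $k(\cC)\le1/2$. So the mild subtlety is to keep the strictness from Lemma \ref{linear} rather than weakening it to $\le1$, and to observe that even if $\cD_0,\cD_N$ contribute exactly $1$, the product $\tanh(\tfrac14\Delta(\cE))^2\cdot\tfrac12$ is still $<\tfrac12$. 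Everything else is routine.
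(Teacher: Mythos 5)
Your argument is correct and is essentially the paper's proof: the paper simply states that the result ``follows at once'' from Lemmas \ref{linear}, \ref{inversion} and \ref{sqrt}, which is precisely the submultiplicativity-of-$k$ bookkeeping you carry out explicitly, with the strictness supplied by $k(\cE)=\tanh(\tfrac14\Delta(\cE))<1$ and $k(\mathcal R)=1/2$. Nothing further is needed.
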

\begin{proof}The result follows at once from Lemmas \ref{linear}, \ref{inversion}, \ref{sqrt}.
\end{proof}
 We have the following result.
\begin{thm} \label{fundtheorem} Assume that the matrix $G=M(N-1) M(N-2)\cdots M(1) M(0)$ all positive elements  $g_{ij}$. Let $\nu_0$ and $\nu_N$ be any two probability distributions on $\mathcal X$.  Then, there exist a unique choice of the vectors
$\varphi(0),\,\hat\varphi(N)$ with positive entries such that
\begin{subequations}\label{eq:iteration}
\begin{align}\label{eq:iterationa}
&\varphi(t,i)=\sum_{j}m_{ij}(t)\varphi(t+1,j), \; 0\le t\le N-1\\
&\hat\varphi(t+1,j)=\sum_{i}m_{ij}(t)\hat\varphi(t,i), \; 0\le t\le N-1\label{eq:iterationb}\\
\label{eq:iterationc}
&\varphi(0,x_0)\hat\varphi(0,x_0)=\nu_0(x_0),\\\label{eq:iterationd}
&\varphi(N,x_N)^2\hat\varphi(N,x_N)=\nu_N(x_N).
\end{align}
\end{subequations}
\end{thm}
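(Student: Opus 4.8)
The plan is to reduce the solvability of the generalized Schr\"odinger system \eqref{eq:iteration} to the existence of a fixed point of the composition map $\cC = \cE^\dagger\circ\cD_0\circ\cE\circ\mathcal R\circ\cD_N$ introduced in Theorem~\ref{compothm}. First I would observe that, because of the linear recursions \eqref{eq:iterationa}--\eqref{eq:iterationb}, the entire system is determined by the pair of boundary vectors $\varphi(N)$ and $\hat\varphi(0)$: indeed $\varphi(0)=M(0)M(1)\cdots M(N-1)\varphi(N)=\cE^\dagger(\varphi(N))$ wait — more precisely $\varphi(0)=G^{\!\top}\!\!$-type product acting on $\varphi(N)$, which is exactly $\cE(\varphi(N))$ in the notation of Section~\ref{genSch}, and $\hat\varphi(N)=G^{\!\top}\hat\varphi(0)=\cE^\dagger(\hat\varphi(0))$. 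So I would eliminate the interior values and rewrite the two coupling conditions \eqref{eq:iterationc}--\eqref{eq:iterationd} purely in terms of, say, $\hat\varphi(0)$. Tracing through: from \eqref{eq:iterationd}, $\varphi(N)=\mathcal R(\nu_N/\hat\varphi(N))=\mathcal R\circ\cD_N(\hat\varphi(N))=\mathcal R\circ\cD_N\circ\cE^\dagger(\hat\varphi(0))$; then $\varphi(0)=\cE(\varphi(N))=\cE\circ\mathcal R\circ\cD_N\circ\cE^\dagger(\hat\varphi(0))$; then from \eqref{eq:iterationc}, $\hat\varphi(0)=\nu_0/\varphi(0)=\cD_0(\varphi(0))$, so altogether
\[
\hat\varphi(0)=\cD_0\circ\cE\circ\mathcal R\circ\cD_N\circ\cE^\dagger(\hat\varphi(0)).
\]
This is a fixed-point equation for $\hat\varphi(0)$; conjugating by $\cE^\dagger$ (or simply noting that a fixed point of $\cC$ and a fixed point of the cyclically permuted composition correspond) gives exactly the map $\cC$ of \eqref{composition} acting on $x=\cE^\dagger(\hat\varphi(0))=\hat\varphi(N)$.

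Next I would invoke the machinery of Section~\ref{Hilbert}. By Theorem~\ref{compothm}, $\cC$ is a strict contraction on $({\rm int}\,\R^n_+,d_H)$ with ratio $<1/2$. Since $d_H$ is only a projective metric, I would pass to the complete metric space $Y=({\rm int}\,\R^n_+\cap U,d_H)$ from Example~\ref{ex1} (completeness noted right after Theorem~\ref{BBcontraction2}), on which $\cC$ descends to a genuine contraction because it is positively homogeneous of some degree (each building block is homogeneous: $\cE,\cE^\dagger$ of degree $1$, $\cD_0,\cD_N$ of degree $-1$, $\mathcal R$ of degree $1/2$, so $\cC$ is homogeneous of degree $1\cdot(-1)\cdot 1\cdot\tfrac12\cdot(-1)=\tfrac12$). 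The Banach fixed-point theorem then yields a unique fixed ray, i.e. a unique $\hat\varphi(N)\in{\rm int}\,\R^n_+$ up to positive scaling with $\cC(\hat\varphi(N))=\hat\varphi(N)$ in the projective sense.

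The final step is to upgrade the projective (up-to-scaling) fixed point to an honest solution of \eqref{eq:iteration} with the \emph{exact} normalization, and to establish uniqueness of the full tuple. Here I would use the specific structure of the coupling: if $\cC(x)=\lambda x$ for some scalar $\lambda>0$, I would show by running the recursions forward that the induced candidate marginal $p^*(0,\cdot)=\varphi(0)\hat\varphi(0)$ integrates to a quantity that pins down the scaling, forcing $\lambda=1$ after the unique admissible rescaling of the ray — essentially because $\sum_i\varphi(0,i)\hat\varphi(0,i)$ must equal $\sum_i\nu_0(i)=1$ and this sum transforms predictably under $x\mapsto sx$, giving a unique $s$. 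The positivity of all entries of $G$ guarantees $\varphi(0),\hat\varphi(N)$ have all positive entries, and via \eqref{eq:iterationd} that $\varphi(N,j)=0$ exactly when $\nu_N(j)=0$, so the square root and the divisions are all well-defined; I would remark that the degenerate coordinates (where $\nu_N(j)=0$) are handled by the convention $0\cdot\log 0=0$ and cause no trouble since $\cD_N$ need only be applied on the support. I expect the main obstacle to be precisely this last bookkeeping: verifying that the scaling ambiguity of the Hilbert-metric fixed point collapses to a single point satisfying \emph{both} normalization conditions \eqref{eq:iterationc} and \eqref{eq:iterationd} simultaneously, rather than just one of them — one must check the two scalar constraints are compatible and together select a unique representative, which uses that $\mathcal R$ contributes the ``square'' in \eqref{eq:iterationd} in a way consistent with the degree-$\tfrac12$ homogeneity of $\cC$.
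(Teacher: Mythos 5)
Your overall architecture coincides with the paper's: eliminate the interior values via the linear recursions, reduce the four boundary conditions to the single fixed-point equation $\cC(\hat\varphi(N))=\hat\varphi(N)$ for the composition \eqref{composition}, and invoke the strict contraction of Theorem~\ref{compothm} in the Hilbert metric to obtain a unique invariant ray in ${\rm int}\,\R^n_+$. Up to that point the proposal is sound (the hesitation about the ordering of the factors $M(t)$ in $G$ is a notational wrinkle already present in the paper and immaterial to positivity and to the contraction argument).

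The genuine gap is in your final step, where you collapse the projective ambiguity. You propose to select the representative on the fixed ray by imposing $\sum_i\varphi(0,i)\hat\varphi(0,i)=\sum_i\nu_0(i)=1$. But this identity holds \emph{for every} positive scaling of the candidate $\hat\varphi(N)$, because $\hat\varphi(0)$ is by construction $\nu_0/\varphi(0)$ (that is exactly what $\cD_0$ does), so $\varphi(0)\hat\varphi(0)=\nu_0$ identically; the proposed condition is vacuous and cannot pin down the scale, and ``forcing $\lambda=1$'' does not follow from it. The correct mechanism is the one you only gesture at in your closing sentence: $\cC$ is positively homogeneous of degree $1/2$ (as you computed), and a positively homogeneous map of degree $p\neq 1$ has exactly one genuine fixed point on an invariant ray. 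Concretely, if $\cC(\phi)=\lambda\phi$ with $\lambda>0$, set $\hat\varphi(N):=\lambda^2\phi$; then $\cC(\lambda^2\phi)=(\lambda^2)^{1/2}\cC(\phi)=\lambda\cdot\lambda\phi=\lambda^2\phi$, so the loop closes exactly, and uniqueness on the ray follows since $\cC(s\phi)=s^{1/2}\lambda\phi=s\phi$ forces $s^{1/2}\lambda=s$, whose unique positive solution is $s=\lambda^2$. With the scale fixed this way there is no further compatibility check between \eqref{eq:iterationc} and \eqref{eq:iterationd}: condition \eqref{eq:iterationd} is encoded in $\mathcal R\circ\cD_N$ by the definition $\varphi(N)=\sqrt{\nu_N/\hat\varphi(N)}$, condition \eqref{eq:iterationc} holds by the definition of $\cD_0$, and the only nontrivial constraint is that the forward propagation of $\hat\varphi(0)$ reproduce the $\hat\varphi(N)$ one started from --- which is precisely the fixed-point equation.
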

\begin{proof} Consider the iteration
\begin{equation}\label{iteration}
(\hat\varphi(N,\cdot))_{\rm next}=\cC(\hat\varphi(N,\cdot))
\end{equation}
Notice that the componentwise divisions of $\cD_0$ and $\cD_N$ are well defined. Indeed, even when $\hat\varphi(0)$ ($\varphi(N)$) has zero entries, $\hat\varphi(N)$ ($\varphi(0)$) has all positive entries since the elements of $G$ are all positive.
Since $\cC$ is strictly contractive in the Hilbert metric, the iteration \eqref{iteration} would converge to a ray that is invariant under $\cC$. Let $\phi$ be any positive vector on this ray, then
\begin{equation*}
\cC(\phi)= \lambda \phi
\end{equation*}
for some positive number $\lambda$. Now let $\hat\varphi(N) =\lambda^2\phi$. Then it is straightforward to verify
\begin{equation}
	\cC(\hat\varphi(N)) = \lambda\cC(\phi) = \lambda^2\phi =\hat\varphi(N).
\end{equation}
Moreover, this is the unique vector that satisfies the above condition. Define
	\begin{eqnarray*}
		\varphi(N) &=& \sqrt{\frac{\nu_N}{\hat\varphi(N)}}
		\\
		\varphi(0) &=& \cE(\varphi(N))
		\\
		\hat\varphi(0) &=& \frac{\nu_0}{\varphi(0)}
	\end{eqnarray*}
and $\varphi(t), \hat\varphi(t)$ according to \eqref{eq:iterationa}-\eqref{eq:iterationb}, then clearly these vectors are consistent with the Schr\"{o}dinger system (\ref{eq:iterationa})-(\ref{eq:iterationb})-(\ref{eq:iterationc})-(\ref{eq:iterationd}).
\end{proof}

\section{An algorithm contracting Hilbert's metric and some extensions}\label{algo}
Let $\mathbf{1}^\dagger=(1,1,\ldots,1)$.
The iteration in (\ref{iteration}) suggests the following algorithm:
\begin{description}
\item [a.] Set $x=x(0)=\mathbf{1}$;
\item [b.] Set $x_{\rm next}=\cC(x)$;
\item [c.] Iterate until you reach a fixed point $\bar{x}=\cC(\bar{x})$ ({\em stopping criterion: $|\bar x-\cC\bar x|<10^{-4}$});
\item [d.] Set $\hat\varphi(N)=\bar{x}$;
\item [e.] Use
\begin{equation}\label{eq:stepe}
\varphi(N,x_N)=\sqrt{\frac{\nu_N(x_N)}{\hat{\varphi}(N,x_N)}}
\end{equation}
to compute $\varphi(N)$ and then (\ref{eq:iterationa}) to compute $\varphi(t)$ for $t=N-1,N-2,\ldots, 1,0$;
\item [f.] Compute the optimal transition probabilities $\pi^*_{ij}(t)$ according to (\ref{opttransition1});
\item [g.] The solution to Problem \ref{relbridge} is the time inhomogeneous Markovian distribution (\ref{optmeasure}) with initial marginal $\nu_0$ and transition probabilities $\pi^*_{ij}(t)$.
\end{description}

The assumption that the elements  $g_{ij}$ of the matrix $G=M(N-1) M(N-2)\cdots M(1) M(0)$ be all positive can be relaxed. For instance, if both $\nu_0$ and $\nu_N$ are everywhere positive on $\mathcal X$, it suffices that $M$ has at least one positive element in each row and column to guarantee that the componentwise divisions of $\cD_0$ and $\cD_N$ are well defined. In that case, Theorems \ref{mainthm} and \ref{compothm} hold true and the algorithm of this section applies.

Our analysis and algorithm can be generalized to the cost function 
	\begin{equation}\label{eq:weightedD}
		\D(P\|\fM)+\eta\D(p_N\|\nu_N)
	\end{equation}
for any $\eta\ge 0$. In this case, we only need change \eqref{eq:generalizedbridgeD} in the Schr\"odinger system to
	\[
		\varphi(N,j)^{\frac{\eta+1}{\eta}}\hat\varphi(N,j)=\nu_N(j)
	\]
and \eqref{eq:stepe} to
	\[
		\varphi(N,x_N)=\left(\frac{\nu_N(x_N)}{\hat{\varphi}(N,x_N)}\right)^{\frac{\eta}{\eta+1}}
	\]
in the algorithm. The convergence rate is strictly upper bounded by $\frac{\eta}{\eta+1}$. The parameter $\eta$ measures the significance of the penalty term $\D(p_N\|\nu_N)$. When $\eta$ goes to infinity, we recover the traditional Schr\"odinger bridge. The upper bound is $1$ in this case. On the other hand, when $\eta=0$, the solution is trivial in view of (\ref{relentrdec2}). It is the Markov process  with kernel $M(t)$ (assuming that all $M(t)$ are stochastic matrices) and initial distribution $\nu_0$. Indeed, $\frac{\eta}{\eta+1}=0$ implies $\varphi(N,\cdot)=1$ on $\mathcal X$. In view of (\ref{eq:iterationa}), we get  $\varphi(t,i)\equiv 1$. This is intuitive and we do not need to run the algorithm to solve the problem when $\eta=0$.

\begin{ex}
Consider the graph in Figure \ref{fig:graph}. We seek to transport masses from initial distribution $\nu_1=\delta_1$ to target distribution $\nu_N=1/2\delta_6+1/2\delta_9$. The step $N$ is set to be $3$ or $4$.
\begin{figure}[h]
\centering
\includegraphics[width=0.35\textwidth]{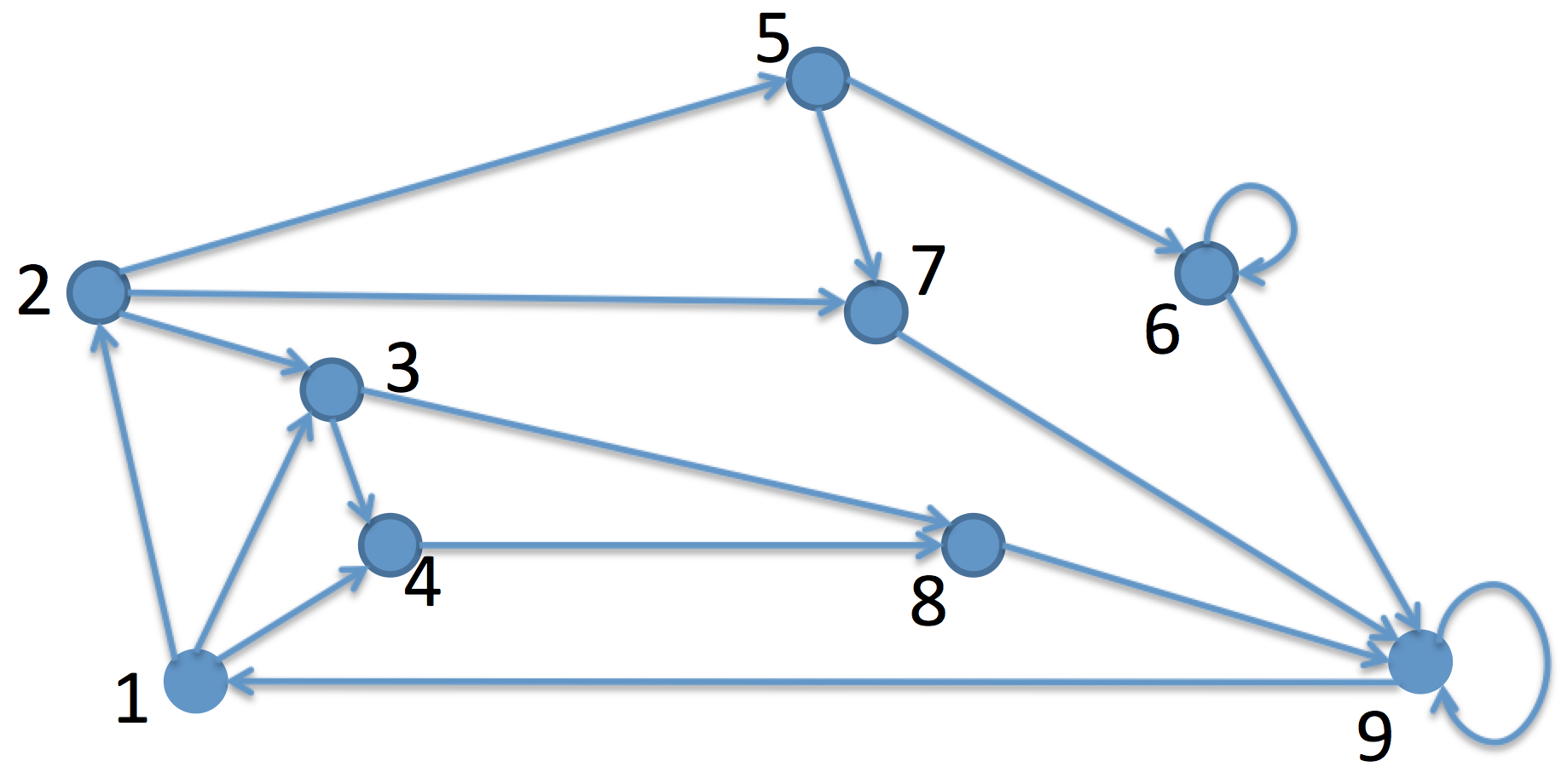}
\caption{transport graph}
\label{fig:graph}
\end{figure}
When $N=3$, the evolution of mass distribution by solving Problem \ref{relbridge} is given by
 \[
      \tiny
       \left[
       \begin{matrix}
       1 & 0 & 0 & 0 & 0 & 0 & 0 & 0 & 0\\
       0 & 0.5865 & 0.2067 & 0.2067 & 0 & 0 & 0 & 0 & 0\\
       0 & 0 & 0 & 0 & 0.3798 & 0 & 0.2067 & 0.4135 & 0\\
       0 & 0 & 0 & 0 & 0 & 0.3798 & 0 & 0 & 0.6202
       \end{matrix}
       \right],
\]
where the four rows of the matrix show the mass distribution at time step $t=0, 1, 2 ,3$ respectively. The prior law $M$ is taken to be the Rulle Bowen random walk \cite{CGPT1}. The mass spreads out before reaching nodes $6$ and $9$. Due to the soft terminal constraint, the terminal distribution is not equal to $\nu_N$. When we allow for more steps $N=4$, the mass spreads even more before reassembling at nodes $6, 9$, as shown below,
 \[
       \tiny
       \left[
       \begin{matrix}
       1 & 0 & 0 & 0 & 0 & 0 & 0 & 0 & 0\\
       0 & 0.6941 & 0.2040 & 0.1020 & 0 & 0 & 0 & 0 & 0\\
       0 & 0 & 0.1020 & 0.1020 & 0.4901 & 0 & 0.1020 & 0.2040 & 0\\
       0 & 0 & 0 & 0 & 0 & 0.3881 & 0.1020 & 0.2040 & 0.3059\\
       0 & 0 & 0 & 0 & 0 & 0.2862 & 0 & 0 & 0.7138
       \end{matrix}
       \right].
\]
The terminal distribution is again not equal to $\nu_N$. However, if we increase the penalty on $\D(p_N\|\nu_N)$, then the difference between $p_N$ and $\nu_N$ becomes smaller, as can be seen below, which is the distribution evolution when $\eta=10$ in \eqref{eq:weightedD}
 \[
       \tiny
       \left[
       \begin{matrix}
       1 & 0 & 0 & 0 & 0 & 0 & 0 & 0 & 0\\
       0 & 0.7679 & 0.1547 & 0.0774 & 0 & 0 & 0 & 0 & 0\\
       0 & 0 & 0.0774 & 0.0774 & 0.6132 & 0 & 0.0774 & 0.1547 & 0\\
       0 & 0 & 0 & 0 & 0 & 0.5359 & 0.0774 & 0.1547 & 0.2321\\
       0 & 0 & 0 & 0 & 0 & 0.4585 & 0 & 0 & 0.5415
       \end{matrix}
       \right].
\]
\end{ex}

\section{Final comments}

Since the work of Mikami, Thieullen, Leonard, Cuturi \cite{Mik,mt,MT,leo2,leo,Cuturi}, a large number of papers have appeared where Schr\"{o}dinger bridge problems are viewed as regularization of the important Optimal Mass Transport (OMT) problem, see e.g., \cite{BCCNP,CGP4,CGP,CGP5,LYO,Alt,CPSV}. This is, of course, interesting and extremely effective as OMT is computationally challenging \cite{AHT,BB}. Nevertheless, one should not forget that Schr\"{o}dinger bridge problems have at least two other important motivations: The first is Schr\"odinger's original ``hot gas experiment'' model, namely {\em large deviations of the empirical distribution on paths} \cite{F2}. The second is a {\em maximum entropy principle in statistical inference}, namely choosing the a posterior distribution so as to make the fewest number of assumptions about what is beyond the available information. This inference method has been noticeably developed over the years by Jaynes, Burg, Dempster and Csisz\'{a}r \cite{Jaynes57,Jaynes82,BURG1,BLW,Dem,csiszar0,csiszar1,csiszar2}. It is this last concept which largely inspired the original approach taken in this paper and in \cite{CGPT1,CGPT2} although connections to OMT were made there. The prior mass distribution on paths may namely simply encode the topological information of the network or that plus the length of each link and is not necessarily a probability distribution. In this paper, in particular, we have considered a relaxed version of the problem where the final distribution only need to be close to a desired one. This has been formalized by adding to the criterion the Kullback-Leibler distance between the final distribution and the desired one. We have shown that the solution can be otained solving a Schr\"{o}dinger system with different terminal condition. An iterative algorithm contracting the Hilbert metric with contraction ratio less than $1/2$ to compute the solution has been provided as well.

\end{document}